\newcommand{\eqdistr}{\stackrel{\mathcal D}{=}}
\theoremstyle{plain}
\newtheorem{thm}{Theorem}[] 
\newtheorem{cor}[]{Corollary}
\newtheorem{lem}[]{Lemma}
\theoremstyle{definition}
\newtheorem{remark}[]{Remark}
\title{Asymptototic Expected Utility of Dividend Payments in a Classical Collective Risk Process}
\author{Sebastian Baran}
\address{Institute of Quantitative Methods in Social Sciences\\
Cracow University of Economics\\
31-510 Kraków\\
Poland}
\email{sebastian.baran@uek.krakow.pl}
 \author{Corina Constantinescu}
 \address{nstitute for Financial and Actuarial Mathematics\\
 Department of Mathematical Sciences\\
 University of Liverpool\\
 Liverpool L69 7ZL\\
 UK}
\email{c.constantinescu@liverpool.ac.uk}
\author{Zbigniew Palmowski}
\address{Department of Applied Mathematics\\
Faculty of Pure and Applied Mathematics\\
Wroc{\l}aw University of Science and Technology\\
50-370 Wroc{\l}aw\\
Poland}
\email{zbigniew.palmowski@pwr.edu.pl}
\thanks{Z. Palmowski acknowledges that the research is partially supported by Polish National Science Centre Grant No. 2021/41/B/HS4/00599.}
\begin{document}
\maketitle
\begin{abstract}We find the asymptotics of the value function maximizing the expected utility of discounted dividend payments of an insurance company whose reserves are modeled as a classical Cram\'er risk process, with exponentially distributed claims, when the initial reserves tend to infinity. We focus on the power and logarithmic utility functions. We perform some numerical analysis as well.
\end{abstract}

\section{Introduction}

The problem of identifying the optimal dividend strategy of an insurance company was introduced in the seminal paper of \cite{Fin} and mathematically formalized by \cite{Gerb}. Since then, many authors have analyzed various scenarios for which they proposed optimal dividend strategies.

\cite{Gerb} assumed that the reserve process $R=(R_t)_{t\geq 0}$ of an insurance company follows a classical Cram\'er-Lundberg risk process given by
\begin{equation}
R_t=x+\mu t-\sum_{i=1}^{N_t}Y_i,\label{CL2}
\end{equation}
where $Y_1,Y_2,\ldots$ are i.i.d positive random variables with an absolutely continuous distribution function $F_Y,$ representing the claims; $N=(N_t)_{t\geq 0}$ is an independent Poisson process, with intensity $\lambda>0,$ modeling the times at which the claims occur; $x>0$ denotes the initial surplus; and $\mu$ is the premium intensity.
We further consider the dividend payments, defined via
 an adapted and nondecreasing process  $D=(D_t)_{t\geq 0}$, representing all the accumulated dividend payments up to time $t$.
Then, the {\it regulated} process $X=(X_t)_{t\geq 0}$ is given by
\begin{equation}\label{regproc}
X_t=R_t-D_t.
\end{equation}
We observe this regulated process $X_t$ until the time of ruin
\begin{equation*}
\tau=\inf\{t\geq 0\colon X_t<0\}.
\end{equation*}
The time of ruin of an insurance company depends on the chosen dividend strategy. We assume that the usual net profit condition, $\mu>\lambda E(Y_1)$, for the underlying Cram\'er-Lundberg risk process, is fulfilled. Another natural assumption is that no dividends are
paid after the ruin.

\cite{JeanShir} and  \cite{GerberEllias}
consider the optimal dividend problem in a Brownian setting.
\cite{Zhou} study the constant
barrier under the Cram\'{e}r-Lundberg model and \cite{APP} under the L\'evy model.
For related works considering the dividend problem, we refer to  \cite{AT}; \cite{AM};  \cite{Julia}; \cite{Gao}; \cite{GHS}; \cite{Kei};  \cite{survey2}; \cite{Loeffen1}; \cite{Schmidli}; \cite{AT09};   \cite{aaruin}; \cite{TA}; \cite{EisenbergSchmidli}; \cite{APP2015}
and references therein.

Inspired by  \cite{Hub}, we consider, {\it instead} of the classical maximization of the expected value
of the {\it discounted dividend payments}, the maximization of the expected value of the {\it utility of these payments}, for some utility
function $U.$
\cite{Hub} consider the asymptotic of the expected discounted utility of dividend payments for a {\it Brownian risk process with drift}  under the assumption that $(D_t)_{t\geq 0}$ is absolutely continuous with respect to the Lebesgue measure. 
 Under the same assumption of $(D_t)_{t\geq 0}$, we perform the asymptotic analysis of the expected utility in a {\it classical compound Poisson risk model}, which, due to its jumps, brings an extra level of complexity. As in \cite{Hub}, we solve some 'peculiar' non-homogenous differential equations.

Assuming that the process $D_t$ admits, almost surely, a density, denoted by $(d_t)_{t\geq 0}, $ {\it namely} for each $t\geq 0$,
\begin{equation*}
D_t=\int_0^t d_sds\quad \textrm{a.s.},
\end{equation*}
we define the target value function as

\begin{equation}\label{wartosc}
v(x)=\sup_{(d_t)_{t\geq0}}\mathbb{E}_x\left(\int_0^{\tau}e^{-\beta t}U(d_t)\,dt\right),
\end{equation}
where $\beta$ is a discount factor, $U$ is a fixed differentiable
utility function, which equals $0$ on the negative half-line, and $\mathbb{E}_x$  represents the expectation with respect to $\mathbb{P}_x(\cdot)=P(\cdot|X_0=x)$.
Here, the density
models the intensity of the dividend payments in continuous time,  and thus we will be maximizing the value function $v(x)$ over all admissible dividend {\it strategies} $(d_t)_{t\geq 0}$. We assume that the dividend density process $(d_t)_{t\geq 0}$ is admissible,  whenever it is a
nonnegative, adapted and c\'adl\'ag process, and there are no dividends after the ruin, namely $d_t=0$, for all $t\geq \tau$. We denote by $\mathfrak{C}$ the set of all admissible strategies $(d_t)_{t\geq 0}$.

Moreover, we restrict ourselves to Markov strategies, meaning that, for every $t\geq 0$, the strategy $(d_t)_{t\geq 0}$
depends only on the amount of the present reserves.
We introduce a non-decreasing function $c,$ such that
$$d_t=c(X_t), \quad \mbox{for any} \quad t \geq 0.$$
The non-decreasing assumption is justified by the fact that
the company should be willing to pay more dividends whenever it has larger reserves.
Finally, we assume that the ruin cannot be caused by the dividend payment alone
and we choose $d_t$ such that the value function given in \eqref{wartosc} is well-defined and finite for all $x\geq 0$.
%

The above dividend problem can be used to monitor the financial state of the company.
In particular, it can be considered as a signalling device of future prospects.
In this paper, we assume that the company has large reserves and therefore by taking
the initial value to infinity we can produce a very transparent optimal strategy and hence a very clear
and simple value function, which, we believe, is crucial from a management point of view.


For the above dividend problem, one can formulate the Hamilton-Jacobi-Bellman (HJB) equation of the optimal value function (see Section \ref{sec:HJB}).
Although impossible to solve this HJB equation explicitly (see, e.g., \cite{AT}), one can analyze the asymptotic properties of its solutions for large initial reserves. We focus on the asymptotic analysis of such value functions when the claim sizes are exponentially distributed, with utility functions that are either powers or logarithms  (see Section \ref{seq:Asympt}). We also introduce a numerical algorithm for identifying such value functions (see Section \ref{sec:NumericalAnalysis}).

\section{Hamilton--Jacobi--Bellman Equation}
\label{sec:HJB}
From now on, we assume that $U\in \mathcal{C}^\infty(\mathbb{R}_{>0})$
is increasing and strictly concave, such that $U(0)=0$,
$\lim\limits_{x\to\infty}U_x(x)=0$ and $\lim\limits_{x\to\infty}U(x)=\infty,$
where $f_x(x)$ denotes the derivative of a function $f$ with respect to $x$.
We denote by $\mathfrak{C}_{d_0}$ the set of all admissible strategies $(d_t)_{t\geq 0}$ bounded above by $d_0$ and let
\begin{equation}\label{valuefunctionczero}
v^{(0)}(x)=\sup_{d_t\in\mathfrak{C}_{d_0}}\mathbb{E}_x\left(\int_0^{\tau}e^{-\beta t}U(d_t)\,dt\right).
\end{equation}
Using the verification theorem, one can prove the following theorem.
\begin{thm}\label{twr:DiffOfv2}
If $d_{0}>\mu$ then the value function $v^{(0)}(x)$ is differentiable and fulfills the Hamilton--Jacobi--Bellman equation:
\begin{equation}\label{eq:NoweHJBPelne2InTheorem}
\sup_{0\leq d\leq d_0}\left\{(\mu-d)v^{(0)}_x(x)-(\beta+\lambda)v^{(0)}(x)+U(d)+\lambda\int_{0}^{x}v^{(0)}(x-y)dF_Y(y)\right\}=0.
\end{equation}
\end{thm}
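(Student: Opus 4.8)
The plan is to combine the dynamic programming principle with a verification argument based on Dynkin's formula, and to extract differentiability from the fact that $d_0>\mu$ makes the reserve locally controllable in both directions. Before touching the equation I would record the elementary regularity of $v^{(0)}$: because $0\le U(d_t)\le U(d_0)$ for every $(d_t)\in\mathfrak{C}_{d_0}$ and ruin can only be triggered by a claim, the value function is finite (indeed bounded by $U(d_0)/\beta$), nonnegative and nondecreasing, and a coupling of the reserves started at $x$ and at $x'$ (feeding the same premium, claims and dividend strategy) shows that $v^{(0)}$ is locally Lipschitz; in particular it is absolutely continuous and therefore differentiable at almost every $x$. The whole difficulty is to promote this to differentiability at every point.

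To obtain the equation I would first establish the dynamic programming principle
\begin{equation*}
v^{(0)}(x)=\sup_{(d_t)\in\mathfrak{C}_{d_0}}\mathbb{E}_x\left(\int_0^{\tau\wedge h}e^{-\beta t}U(d_t)\,dt+e^{-\beta(\tau\wedge h)}v^{(0)}(X_{\tau\wedge h})\right),\qquad h>0,
\end{equation*}
with the convention $v^{(0)}\equiv 0$ on the negative half-line, so that the ruin event is absorbed into the terminal term. Freezing a constant control $d\in[0,d_0]$ and conditioning on the number of claims in $[0,h]$ --- no claim with probability $1-\lambda h+o(h)$, in which case $X_h=x+(\mu-d)h$, and exactly one claim with probability $\lambda h+o(h)$, contributing $\int_0^x v^{(0)}(x-y)\,dF_Y(y)$ --- an expansion of $e^{-\beta h}=1-\beta h+o(h)$ collects, at order $h$, exactly the bracketed operator in \eqref{eq:NoweHJBPelne2InTheorem}. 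The integral runs over $[0,x]$ rather than $[0,\infty)$ precisely because a claim larger than $x$ forces ruin and contributes nothing. Taking the supremum over $d$ produces the HJB equation, and the supremum is attained because $d\mapsto(\mu-d)p+U(d)$ is concave on the compact interval $[0,d_0]$. The same computation, read through Dynkin's formula for the piecewise-deterministic process $X$, is the verification step: for any sufficiently regular supersolution $W$ the process $e^{-\beta(t\wedge\tau)}W(X_{t\wedge\tau})+\int_0^{t\wedge\tau}e^{-\beta s}U(d_s)\,ds$ is a supermartingale, and a martingale under the control attaining the pointwise supremum, which identifies $W$ with $v^{(0)}$.

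The heart of the proof --- and the step that genuinely needs $d_0>\mu$ --- is differentiability at every point. I would first record that the Lipschitz value function is a viscosity solution of \eqref{eq:NoweHJBPelne2InTheorem}, a statement that follows from the dynamic programming principle without any prior smoothness. The upgrade to $\mathcal{C}^1$ then exploits two-sided controllability: since $d_0>\mu$, the admissible drift $\mu-d$ ranges over $[\mu-d_0,\mu]$, which contains $0$ in its interior, so choosing $d=0$ drives the reserve upward at rate $\mu$ while choosing $d=d_0$ drives it downward at rate $d_0-\mu>0$. Testing the dynamic programming principle with these two extreme controls over $[0,h]$ and letting $h\downarrow0$ bounds the one-sided Dini derivatives of $v^{(0)}$ from above and from below by quantities built only from the continuous terms $v^{(0)}$ and $\int_0^x v^{(0)}(x-y)\,dF_Y(y)$; reconciling these bounds through the viscosity sub- and supersolution inequalities forces the left and right derivatives to coincide. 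Once differentiability is known, continuity of $v^{(0)}_x$ is read off directly from \eqref{eq:NoweHJBPelne2InTheorem}, since every other term is continuous in $x$.

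I expect the reconciliation of the one-sided derivatives to be the main obstacle: the bounds coming from the two extreme controls are only sub-optimality bounds and do not match automatically, so closing the gap requires the full viscosity characterization together with the monotonicity of $v^{(0)}$, and it is exactly here that the hypothesis $d_0>\mu$ is indispensable --- without it the drift $\mu-d$ cannot be made negative, the reserve cannot be pushed down, and a downward kink in $v^{(0)}$ could survive. A secondary delicate point is the behaviour at the boundary $x=0$, where the assumption that ruin cannot be caused by dividends alone must be combined with $d_0>\mu$ to guarantee that the equation, and the differentiability statement, extend up to the boundary.
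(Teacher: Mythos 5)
Your overall route --- dynamic programming principle, first-order expansion over a small horizon conditioning on $0$ or $1$ claims, a verification/supermartingale argument, and the use of $d_0>\mu$ for two-sided controllability of the drift --- is exactly the standard one, and it matches what the paper does: the paper gives no self-contained proof but defers to the verification-theorem argument of Theorem~3.3 of Baran--Palmowski (2017), which proceeds along these same lines. So there is no genuine difference of method to report; the issue is whether your sketch actually closes.

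Two points remain genuinely open in your write-up. First, the local Lipschitz property: the coupling you describe (same premium, claims and dividend stream from $x$ and $x'$) controls the paths only up to the first time one copy is ruined and the other is not, and bounding the value lost on that event by $C|x-x'|$ is precisely the difficulty, so the coupling does not by itself yield Lipschitz continuity. The standard fix is different: starting from $x<x'$, pay no dividends until the surplus reaches $x'$ at time $h=(x'-x)/\mu$ (no claim occurring with probability $e^{-\lambda h}$) and then follow an $\varepsilon$-optimal strategy for $x'$; this gives $v^{(0)}(x)\ge e^{-(\beta+\lambda)h}v^{(0)}(x')-\varepsilon$, and since $v^{(0)}\le U(d_0)/\beta$ one obtains a global Lipschitz constant $(\beta+\lambda)U(d_0)/(\beta\mu)$. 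Second, and more seriously, the step you yourself flag as ``the main obstacle'' --- reconciling the one-sided Dini derivative bounds obtained from the two extreme controls $d=0$ and $d=d_0$ so that the left and right derivatives coincide at every point --- is asserted (``forces the left and right derivatives to coincide'') but not carried out, and it is the entire content of the differentiability claim in the theorem. As it stands the proposal establishes at best that $v^{(0)}$ is a Lipschitz viscosity solution satisfying the HJB equation at a.e.\ $x$; the promotion to classical differentiability everywhere, which is where the hypothesis $d_0>\mu$ actually enters, is missing. Until that reconciliation is written out (for instance by showing that at a putative kink the sub- and supersolution inequalities tested against the upward and downward flows are incompatible), the proof is incomplete on precisely the point the theorem is about.
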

The proof of the above theorem follows the same steps as the proof of Theorem 3.3 of  \cite{BarPal}
and therefore we simply refer to them.
Since the set of all possible strategies over which we take the supremum in $v^{(0)}(x)$ is smaller than the one for $v(x)$, then one has $v^{(0)}(x)\leq v(x)$.
We note also that $v^{(0)}(x)$ depends on $d_0$. The goal of the next corollary is to prove that $\lim_{d_0\rightarrow +\infty} v^{(0)}(x)=v(x)$.
\begin{cor}
The optimal value function $v(x)$ is differentiable and fulfills the Hamilton--Jacobi--Bellman equation:
\begin{equation}\label{eq:HJB}
\sup_{d\geq 0}\left\{(\mu-d)v_x
(x)-(\beta+\lambda)v(x)+U(d)+\lambda\int_{0}^{x}v(x-y)dF_Y(y)\right\}=0.
\end{equation}
\end{cor}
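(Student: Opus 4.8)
The plan is to derive \eqref{eq:HJB} from Theorem~\ref{twr:DiffOfv2} by letting $d_0\to\infty$. Since $\mathfrak{C}_{d_0}\subseteq\mathfrak{C}_{d_0'}$ whenever $d_0\le d_0'$, the value function $v^{(0)}$ is non-decreasing in $d_0$, and, as already noted, $v^{(0)}(x)\le v(x)$ for every $x$. Hence the pointwise limit $\bar v(x):=\lim_{d_0\to\infty}v^{(0)}(x)$ exists and satisfies $\bar v\le v$, so the first task is to prove the reverse inequality $\bar v\ge v$.

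For this I would use a truncation argument. Fix an arbitrary admissible strategy $(d_t)_{t\ge0}\in\mathfrak{C}$ and, for each $d_0>\mu$, consider its truncation $d_t^{d_0}:=d_t\wedge d_0\in\mathfrak{C}_{d_0}$. Because the truncated strategy pays no more dividends than $(d_t)$, the corresponding regulated process stays pointwise above $X$, so its ruin time $\tau^{d_0}$ satisfies $\tau^{d_0}\ge\tau$. Using $U\ge0$ to shrink the integration interval gives
\begin{equation*}
v^{(0)}(x)\ge\mathbb{E}_x\!\left(\int_0^{\tau^{d_0}}e^{-\beta t}U(d_t^{d_0})\,dt\right)\ge\mathbb{E}_x\!\left(\int_0^{\tau}e^{-\beta t}U(d_t\wedge d_0)\,dt\right).
\end{equation*}
Since $U$ is increasing and continuous, $U(d_t\wedge d_0)\uparrow U(d_t)$ as $d_0\to\infty$, and monotone convergence lets me pass to the limit on the right-hand side. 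Taking the supremum over $(d_t)\in\mathfrak{C}$ then yields $\bar v(x)\ge v(x)$, so $v^{(0)}\uparrow v$ pointwise.

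It remains to transfer the equation \eqref{eq:NoweHJBPelne2InTheorem} to the limit. The crucial observation is that the inner maximiser is finite: since $v$ is strictly increasing one has $v_x>0$, and because $U$ is strictly concave with $U_x(x)\to0$ the concave map $d\mapsto U(d)-d\,v_x(x)$ attains its maximum over $[0,\infty)$ at the finite point $d^\star(x)=(U_x)^{-1}(v_x(x))$ (or at $0$). Consequently, once $d_0>d^\star(x)$ the constraint $d\le d_0$ is inactive and $\sup_{0\le d\le d_0}=\sup_{d\ge0}$ locally, which removes the only place where $d_0$ enters the equation. Passing to the limit is then completed by noting that the convolution term $\int_0^x v^{(0)}(x-y)\,dF_Y(y)$ converges to $\int_0^x v(x-y)\,dF_Y(y)$ by monotone convergence.

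The main obstacle is the regularity transfer, namely showing that the monotone limit $v$ is itself differentiable with $v^{(0)}_x\to v_x$, since monotone convergence of $\mathcal{C}^1$ functions need not preserve differentiability. I would handle this by deriving bounds on $v^{(0)}_x$ that are uniform in $d_0$ on compact $x$-intervals together with local equicontinuity, either from concavity of the value functions or by reading $v^{(0)}_x$ directly off the equation \eqref{eq:NoweHJBPelne2InTheorem} (where, after inserting the interior maximiser, every remaining term is uniformly bounded and convergent). Arzel\`a--Ascoli then produces a $\mathcal{C}^1$ limit along which the derivatives converge locally uniformly, and inserting these limits into \eqref{eq:NoweHJBPelne2InTheorem} as $d_0\to\infty$ gives exactly \eqref{eq:HJB}.
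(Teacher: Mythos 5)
Your outline is sound and reaches the same conclusion, but it travels a genuinely different road from the paper's. The paper does not prove $v^{(0)}\uparrow v$ by truncating an arbitrary admissible strategy; instead it takes the monotone convergence of the constrained supremum in \eqref{eq:NoweHJBPelne2InTheorem} to the unconstrained one in \eqref{eq:HJB} as the driving mechanism, and identifies a single degenerate scenario that could obstruct it, namely that the optimizer in $\mathfrak{C}_{d_0}$ sits at the boundary, i.e.\ $v^{(0)}=\hat v^{(0)}$ as in \eqref{raz}, where $\hat v^{(0)}$ is the value of the ``pay at the maximal rate $d_0$'' strategy. The paper's key quantitative input is the estimate $\hat v^{(0)}(x)\le \frac{x(1+\epsilon)}{d_0}U(d_0)\to 0$, which uses $U(d_0)/d_0\to 0$ (a consequence of $U'(\infty)=0$ and concavity) to rule this out for large $d_0$. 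Your truncation-plus-monotone-convergence argument for $\bar v\ge v$ is cleaner and more self-contained than the paper's assertion of monotone convergence (and the comparison $\tau^{d_0}\ge\tau$ is legitimate here because $\mu-c(x)$ is nonincreasing in $x$ for nondecreasing $c$, giving a one-sided Lipschitz comparison between jumps); what you lose is that your replacement for the paper's estimate, the claim that the constraint $d\le d_0$ is eventually inactive because the interior maximiser $(U')^{-1}(v^{(0)}_x(x))$ is finite, needs $v^{(0)}_x(x)$ bounded away from $0$ \emph{uniformly in} $d_0$ on compacts, which is not free: for fixed $d_0$ positivity of $v^{(0)}_x$ follows from the equation, but the lower bound one extracts this way degenerates as $d_0\to\infty$, and this is precisely the spot where the paper's $U(d_0)/d_0\to 0$ bound does the work. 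On the regularity transfer (differentiability of the limit and convergence of derivatives) you are at least as careful as the paper, which passes over the issue entirely; your Arzel\`a--Ascoli plan via uniform local bounds on $v^{(0)}_x$ read off from \eqref{eq:NoweHJBPelne2InTheorem} is a reasonable way to close it, provided the same uniform lower bound on $v^{(0)}_x$ mentioned above is secured first.
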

\begin{proof}
Note that $v^{(0)}(x)$ increases monotonically to $v(x)$, for any fixed $x>0,$ as $d_0\rightarrow \infty,$
unless
\begin{equation}\label{raz}
v^{(0)}(x)=\hat{v}^{(0)}(x):=\mathbb{E}_x\left(\int_0^{\tau}e^{-\beta t}U(d_t^{(0)})\,dt\right),
\end{equation}
where $d_t^{(0)}=d_0>\mu,$ when $X_t>0,$
and $d_t^{(0)}=\mu,$ when $X_t=0$ (in this way the ruin is not caused by the dividend payments).
The reason for that is that the supremum on the left hand side of \eqref{eq:NoweHJBPelne2InTheorem}  is a monotone function and thus converges to
the supremum given in \eqref{eq:HJB}.
To exclude \eqref{raz}, it is sufficient to demonstrate that for sufficiently large $d_0$,
for a fixed $x>0$,  the function $\hat{v}^{(0)}(x)$ tends to zero.
 Observe that the regulated risk process equals either $x+\mu t-d_0t$, or equals $0$ until the nearest jump moment, otherwise,
at the time of the first jump, after $t$.
Further, if the first jump happens before $t$, then either the company becomes ruined by this jump/loss or, it continues, but from an initial position/reserve smaller than $x$,
hence collecting a  smaller amount of dividends than $v^{(0)}(x)$.
We recall that $U(0)=0$. Thus
\begin{equation*}
\hat{v}^{(0)}(x)\leq e^{-\lambda t}\mathbb{E}_x\left(\int_0^{\tau_0}e^{-\beta s}U(d_0)\,ds \right) + (1-e^{-\lambda t}){v}^{(0)}(x),
\end{equation*}
where $\tau_0= \frac{x+\mu t}{d_0}$.
Therefore, for any $\epsilon>0$,
we can find a sufficiently small $t>0$, such that
$\hat{v}^{(0)}(x)\leq\frac{x+\mu t}{d_0}U(d_0)
\leq \frac{x(1+\epsilon)}{d_0}U(d_0)$
which tends to zero as $d_0\rightarrow+\infty$, since we assumed that $\lim\limits_{x\to\infty}U'(x)=0$ and that $\lim\limits_{x\to\infty}U(x)=\infty.$
This completes the proof.
\end{proof}
Note that the supremum in (\ref{eq:HJB}) is attained for the function
\begin{equation}
c^*(x)=(U')^{-1}(v_x(x)).\label{optimalstrategy}
\end{equation}
%
We end this section by adding two crucial observations.
By considering the fix strategy $\overline{c}(y)=y$ and the first jump epoch $T$
we have
\begin{eqnarray*}
\lefteqn{v(x)\geq \mathbb{E}_x\left(\int_0^{T}e^{-\beta t}U(\overline{c}(X_t))dt\right)\geq
\mathbb{E}_x\left(\int_0^{1}e^{-\beta t}U(g_{(x)}(t))dt\right)\mathbb{P}(T>1)}\\&&
\geq \mathbb{E}_x\left(\int_0^{1}e^{-\beta t}U(x)dt\right)\mathbb{P}(T>1)=U(x)\mathbb{P}(T>1)\frac{1-e^{-\beta}}{\beta},\end{eqnarray*}
where the function $g_{(x)}(t)$ describes the deterministic trajectory of the risk process \eqref{CL2} up to the first jump time $T$, that is,
$g_{(x)}(t)=x+\mu t,$ $t\leq T.$
From the assumption that $\lim\limits_{x\to\infty} U(x)=\infty$, it follows that

\begin{equation}\label{vxinfty}
\lim\limits_{x\to\infty} v(x)=\infty.
\end{equation}
Moreover, we have the following lemma.
\begin{lem}\label{yzero}
$\lim\limits_{x\to\infty}v_x(x)=0.$\end{lem}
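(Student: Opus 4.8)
The plan is to read off the asymptotics directly from the Hamilton--Jacobi--Bellman equation \eqref{eq:HJB}, whose supremum is attained at the feedback control $c^*(x)=(U')^{-1}(v_x(x))$ of \eqref{optimalstrategy}. Substituting this maximizer turns \eqref{eq:HJB} into the identity
\begin{equation*}
(\mu-c^*(x))\,v_x(x)+U(c^*(x))=(\beta+\lambda)v(x)-\lambda\int_0^x v(x-y)\,dF_Y(y),
\end{equation*}
and the entire argument consists in letting $x\to\infty$ on both sides. First I would record two elementary facts: $v$ is non-decreasing (extra initial capital only relaxes the ruin constraint, and $U\ge 0$ since $U(0)=0$ and $U$ is increasing), so that $v_x(x)\ge 0$; and, by \eqref{vxinfty}, $v(x)\to\infty$.

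The right-hand side is the part I can control cheaply. Since $v$ is non-decreasing, $v(x-y)\le v(x)$ for $0\le y\le x$, hence $\lambda\int_0^x v(x-y)\,dF_Y(y)\le\lambda v(x)F_Y(x)\le\lambda v(x)$, and therefore
\begin{equation*}
(\beta+\lambda)v(x)-\lambda\int_0^x v(x-y)\,dF_Y(y)\ \ge\ \beta v(x)\ \xrightarrow[x\to\infty]{}\ \infty .
\end{equation*}
Consequently the left-hand side $(\mu-c^*(x))v_x(x)+U(c^*(x))$ must diverge as well.

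It remains to convert this divergence into the statement $v_x(x)\to 0$, and here the one structural input is needed: I would use that $v$ is concave, so that $v_x$ is non-increasing and, being non-negative, converges to a limit $L:=\lim_{x\to\infty}v_x(x)\ge 0$. Suppose, for contradiction, that $L>0$. Because $U'$ is decreasing with $U'\to 0$, the inverse $(U')^{-1}$ is continuous and finite on $(0,\infty)$, so $c^*(x)\to(U')^{-1}(L)=:d^*<\infty$ and $U(c^*(x))\to U(d^*)<\infty$; thus the left-hand side converges to the finite constant $(\mu-d^*)L+U(d^*)$, contradicting its divergence. Hence $L=0$, which is the claim.

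The hard part is precisely the existence of the limit $L$, i.e. the monotonicity of $v_x$: without it one only obtains $\liminf_{x\to\infty}v_x(x)=0$ (which follows from the sublinear bound $v(x)=o(x)$, itself a consequence of $U(d)=o(d)$ together with $\int_0^\infty e^{-\beta t}d_t\,dt\le x+\mu/\beta$), and the pointwise limit could in principle fail if $v_x$ were allowed to oscillate and blow up along a subsequence. I would therefore justify concavity separately, e.g.\ from the affine dependence of the regulated trajectory on the pair (initial reserve, dividend density) together with the concavity of $U$ --- the same ingredient that makes the feedback $c^*=(U')^{-1}(v_x)$ a legitimate non-decreasing admissible strategy.
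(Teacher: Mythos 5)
Your route is genuinely different from the paper's, and it is correct \emph{modulo} the one input you yourself flag as the crux. The paper argues much more directly: if the optimal feedback $c^*$ were bounded by some $L$, then $v(x)\le U(L)/\beta$ would be bounded, contradicting \eqref{vxinfty}; since admissible Markov feedbacks are non-decreasing, unboundedness gives $c^*(x)\to\infty$, and then $v_x(x)=U'(c^*(x))\to 0$ by the Inada condition. You instead rearrange the HJB equation \eqref{eq:HJB} at the maximizer, bound the right-hand side below by $\beta v(x)\to\infty$, and rule out $v_x(x)\to L>0$ because it would make the left-hand side converge to a finite constant. Both arguments ultimately rest on the same monotonicity: your ``$v_x$ non-increasing'' (concavity of $v$) is, via $c^*=(U')^{-1}(v_x)$ with $(U')^{-1}$ decreasing, exactly equivalent to ``$c^*$ non-decreasing'' --- and the latter is not something the paper proves, it is part of the standing definition of admissible Markov strategies (the feedback $c$ is assumed non-decreasing). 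So you could simply invoke that assumption rather than prove concavity. The concavity argument you sketch is the genuinely shaky point: in the compound Poisson model the ruin times of the two trajectories being convex-combined need not coincide, so the usual ``affine trajectories plus concave $U$'' argument does not close without extra work on $\tau$; the paper deliberately sidesteps this. With the monotonicity taken from the admissibility assumption your proof stands, though the HJB divergence step can then be replaced by the paper's shorter observation that a bounded feedback forces $v(x)\le U(L)/\beta$, contradicting \eqref{vxinfty}.
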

\begin{proof}
Firstly, we demonstrate that $\lim\limits_{x\to\infty}c^{*}(x)=\infty$.
Recall that, from the definition of an admissible strategy, $c^*$ is
a nondecreasing function and hence it is enough to prove that $c^*$ is unbounded.
Assuming the contrary,  that there exists $L>0$, such that, for all $x\geq 0$, we have $|c^*(x)|\leq L$, it implies that $d^*_t\leq L$ for all $t\geq 0$. Hence
\begin{multline*}
v(x)=\sup_{(d_t)_{t\geq 0}}\mathbb{E}_x\left(\int_0^{\tau}e^{-\beta t}U(d_t)dt\right)
=\mathbb{E}_x\left(\int_0^{\tau}e^{-\beta t}U(d_t^*)dt\right)\leq\\
\leq\mathbb{E}_x\left(\int_0^{\tau}e^{-\beta t}U(L)dt\right)
\leq\mathbb{E}_x\left(\int_0^{\infty}e^{-\beta t}U(L)dt\right)
=\frac{U(L)}{\beta}<\infty.
\end{multline*}
However, this means that $v(x)$ is bounded, which contradicts (\ref{vxinfty}).
Thus, indeed $c^{*}(x)\to\infty$ as $x\to\infty$.
Then

\begin{equation*}
\lim_{x\to\infty}v_x(x)=\lim_{x\to\infty}U'(c^*(x))=0,
\end{equation*}
where the last equality in this equation comes from the Inada condition $\lim\limits_{x\to\infty}U'(x)=0$ required for the utility function.
\end{proof}

\section{Asymptotic Analysis}
\label{seq:Asympt}

From now on, we assume that the claims follow an exponential distribution with parameter $\xi$, that is $Y_i\eqdistr {\rm Exp}(\xi)$ for all $i$.
This section is dedicated to the asymptotic analysis of the expected utility of dividend payments, for large initial reserves $u.$

\subsection{Classical Risk Process \eqref{CL2} and Power Utility Function}

In this subsection, we consider the classical risk process \eqref{CL2} paired with the power utility function

\begin{equation}\label{uzpol}
U(x)=\frac{x^{\alpha}}{\alpha},\qquad  \alpha\in (0,1).
\end{equation}
The supremum in (\ref{eq:HJB}) is attained at

\begin{equation}
c^*=(U')^{-1}(v_x)=v_x^{-\frac{1}{1-\alpha}}\label{eq:dywidendypotegowa}
\end{equation}
and thus, after an integration by parts, the Equation (\ref{eq:HJB}) simplifies to

\begin{equation}
\mu v_{xx}+(\xi\mu-\beta-\lambda) v_x-\xi\beta v+\xi\frac{1-\alpha}{\alpha}v_x^{-\frac{\alpha}{1-\alpha}}-v_x^{-\frac{1}{1-\alpha}}v_{xx}=0\label{eq:exponential}
\end{equation}
where $v_{xx}(x)=v^{''}(x)$ is the second derivative of $v$.
This is a nonlinear second order ODE. Peano Theorem, see \cite[Chp. 1]{CoddLev} guarantees the existence of a solution. For uniqueness, we need two boundary conditions. Evaluating $x=0$ in \mbox{Equation (\ref{eq:HJB})}, we have a first initial condition,

\begin{equation}
v(0)=-\frac{\mu}{\beta+\lambda}v_x(0)+\frac{1-\alpha}{\alpha(\beta+\lambda)}v_x(0)^{-\frac{\alpha}{1-\alpha}}\label{eq:warpoc}.
\end{equation}
The derivation of the second condition is described later, in Remark \ref{uw:2ndcondition}.
In order to asymptotically analyze the solutions of Equation  \eqref {eq:exponential},  we transform it into a nonlinear first order ODE, via a Riccati type substitution, namely $v_x(x)=:y(v(x)).$
\begin{lem} \label{lemma:4}
As $v\rightarrow \infty$, $y(v) \rightarrow 0.$
\end{lem}
\begin{proof}
Let $v_x(x)=y(v(x))$, then using Lemma \ref{yzero} concludes the proof.
\end{proof}
From $v_x=y(v)$, we have that $v_{xx}=y_x(v)=y_vv_x=y_vy$. Substituting into \mbox{Equation \eqref{eq:exponential}}, it produces the following equation

\begin{equation}
\mu y_vy +(\xi\mu-\beta-\lambda)y-\xi\beta v+\xi\frac{1-\alpha}{\alpha}y^{-\frac{\alpha}{1-\alpha}}-y^{-\frac{\alpha}{1-\alpha}}y_v=0\label{eq:exponentialupr}
\end{equation}
which is equivalent with

\begin{equation}
y_v=\frac{(\xi\mu-\beta-\lambda)y-\xi\beta v+\xi\frac{1-\alpha}{\alpha}y^{-\frac{\alpha}{1-\alpha}}}{\mu y-y^{-\frac{\alpha}{1-\alpha}}}.
\end{equation}
This is a nonlinear first order ODE without known explicit solutions. We focus on the asymptotic behaviour of the solutions and derive the asymptotic
optimal strategy of paying dividends $d^{*}_t=c^*(X_t)$, for $c^*(x)$ a function of the initial reserve.

Note that throughout the paper, $f(x)\sim g(x)\iff \lim_{x\to\infty}\frac{f(x)}{g(x)}=1$.

\begin{thm}\label{twr:twr1}
 Let $\alpha=\dfrac{p}{q}\in(0,1)$, where $p,q\in\mathbb{N},\ p<q$. Then, as
 $x\to\infty$,
\begin{eqnarray}
v(x)\sim \left(\frac{1-\alpha}{\beta}\right)^{1-\alpha}\frac{x^{\alpha}}{\alpha},\label{jeden}\\
v_x(x)\sim \left(\frac{1-\alpha}{\beta}\right)^{1-\alpha}x^{\alpha-1},\label{dwa}\\
c^*(x)\sim \frac{\beta}{1-\alpha}x\label{trzy}.
\end{eqnarray}
\end{thm}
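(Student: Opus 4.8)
The plan is to first establish the leading-order asymptotics heuristically by inspecting the reduced Riccati-type ODE \eqref{eq:exponentialupr}, and then confirm it rigorously. Since we know from Lemma \ref{lemma:4} that $y(v)\to 0$ as $v\to\infty$, the dominant balance in \eqref{eq:exponentialupr} as $v\to\infty$ should come from the terms that blow up. The term $y^{-\frac{\alpha}{1-\alpha}}$ tends to $+\infty$, as does $v$, while the terms linear in $y$ vanish. The plan is therefore to guess a power-law ansatz $y(v)\sim C v^{\gamma}$ and determine $C$ and $\gamma$ by matching the two surviving dominant terms, namely $-\xi\beta v$ and $\xi\frac{1-\alpha}{\alpha}y^{-\frac{\alpha}{1-\alpha}}$, together with the product $\mu y_v y$ and $-y^{-\frac{\alpha}{1-\alpha}}y_v$ on the appropriate side.

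Concretely, I expect the balance to force $\xi\frac{1-\alpha}{\alpha}y^{-\frac{\alpha}{1-\alpha}}\sim\xi\beta v$, i.e. $y\sim\left(\frac{1-\alpha}{\alpha\beta}\right)^{\frac{1-\alpha}{\alpha}}v^{-\frac{1-\alpha}{\alpha}}$, and I would check that under this ansatz the remaining terms ($\mu y_v y$, the linear-in-$y$ term, and $y^{-\frac{\alpha}{1-\alpha}}y_v$) are indeed of strictly lower order, so the guessed balance is self-consistent. Translating $y\sim\left(\frac{1-\alpha}{\alpha\beta}\right)^{\frac{1-\alpha}{\alpha}}v^{-\frac{1-\alpha}{\alpha}}$ back through $v_x=y(v)$ gives a separable relation $v^{\frac{1-\alpha}{\alpha}}\,dv\sim\left(\frac{1-\alpha}{\alpha\beta}\right)^{\frac{1-\alpha}{\alpha}}\,dx$, which integrates to $\frac{\alpha}{1}v^{1/\alpha}\sim\left(\frac{1-\alpha}{\alpha\beta}\right)^{\frac{1-\alpha}{\alpha}}x$ up to lower-order corrections; solving for $v(x)$ yields \eqref{jeden}, and then \eqref{dwa} follows by differentiation (or directly from $v_x=y(v)$ evaluated along the asymptotic), while \eqref{trzy} follows by substituting \eqref{dwa} into \eqref{eq:dywidendypotegowa}, since $c^*=v_x^{-1/(1-\alpha)}$.

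The hard part will be upgrading the formal dominant-balance argument into a rigorous asymptotic statement, i.e. controlling the error terms and justifying that the neglected contributions really are $o(1)$ relative to the retained ones. A clean way to do this is to set $w(v):=y(v)\big/\left[\left(\frac{1-\alpha}{\alpha\beta}\right)^{\frac{1-\alpha}{\alpha}}v^{-\frac{1-\alpha}{\alpha}}\right]$ and show $w(v)\to 1$; substituting into \eqref{eq:exponentialupr} should produce an ODE for $w$ whose right-hand side forces $w$ toward the fixed point $1$, and one can argue via a monotonicity/trapping or Gronwall-type estimate that $w$ cannot escape a neighborhood of $1$. The role of the hypothesis $\alpha=p/q$ with $p,q\in\mathbb{N}$ is presumably to make the exponent $\frac{\alpha}{1-\alpha}$ rational, so that $y^{-\frac{\alpha}{1-\alpha}}$ can be handled via an explicit algebraic substitution (a polynomial change of variable $u=y^{1/q}$ or similar), turning the fractional-power ODE into a genuine rational ODE amenable to a Briot--Bouquet / center-manifold analysis near the equilibrium at infinity; I would exploit this to linearize around the asymptotic regime and read off the decay rate of the correction, thereby confirming that the subleading terms do not perturb the leading constant in \eqref{jeden}--\eqref{trzy}.
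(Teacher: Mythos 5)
Your dominant balance is the right one and your bookkeeping is correct: matching $\xi\frac{1-\alpha}{\alpha}y^{-\alpha/(1-\alpha)}$ against $\xi\beta v$ gives $y\sim\left(\frac{1-\alpha}{\alpha\beta}\right)^{\frac{1-\alpha}{\alpha}}v^{-\frac{1-\alpha}{\alpha}}$, and integrating $v_x=y(v)$ does yield \eqref{jeden}--\eqref{trzy} with the stated constants. You also correctly guessed why $\alpha=p/q$ is assumed. But there is a genuine gap at exactly the point you flag as ``the hard part'': self-consistency of a guessed two-term balance shows only that this balance is \emph{possible}, not that the actual solution realizes it. Two things are missing. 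First, you need a reason why \emph{some} pair of terms must be of the same order at all -- i.e., why the solution is regular enough (no oscillation, no exotic growth) for a dominant-balance argument to be valid. The paper gets this from Mari\'c's theorem on first-order algebraic ODEs: after the substitution $z=y^{1/(p-q)}$ (this is where rationality of $\alpha$ is used) the equation becomes $P(v,z)-z_vQ(v,z)=0$ with $P,Q$ polynomials, the solution lies in a Hardy field, all terms are totally ordered under asymptotic comparison, and two of them must have the same order. Second, once that is known, you must \emph{exclude} the competing balances, not just verify yours: the paper enumerates three candidate pairs and kills two of them (one because it forces $z\to0$, contradicting $z\to\infty$; the other because substituting the resulting asymptotics back into the equation yields $1\sim 0$). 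Your proposed substitute -- setting $w=y/(\text{ansatz})$ and running a trapping or Gronwall argument to force $w\to1$ -- could in principle replace both steps, but it is only sketched; for this nonlinear equation ruling out escape of $w$ from a neighborhood of $1$ is essentially the whole proof, and nothing in the proposal carries it out.

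A smaller point: the paper does not stop at the formal integration of $v_x\sim Cv^{-\frac{1-\alpha}{\alpha}}$ either; it solves the exact separable comparison equation $f_x=Cf^{-\frac{1-\alpha}{\alpha}}$ and then invokes Karamata-type (regular variation) arguments to conclude $v\sim f$. If you keep your route, you will need an analogous justification for passing from an asymptotic relation for $v_x$ in terms of $v$ to an asymptotic formula for $v$ in terms of $x$.
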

\begin{remark}
\rm The assumption that $\alpha$ is rational is not restrictive, because the set of all rational numbers is sufficiently large to model various shapes of the power utility function.
\end{remark}
The proof of Theorem \ref{twr:twr1} is given in Appendix \ref{appa}.

\subsection{Classical Risk Process \eqref{CL2} and Logarithmic Utility Function}

We consider the classical risk process \eqref{CL2} and the logarithmic utility function
\begin{equation}
U(x)=\ln(x+1).
\end{equation}
The supremum in the Equation (\ref{eq:HJB}) is attained for

\begin{equation}
c^*=(U')^{-1}(v_x)=\frac{1}{v_x}-1\label{eq:dywlog}
\end{equation}
and this equation simplifies to
\begin{equation}
(\mu+1) v_{xx}+(\xi\mu+\xi-\beta-\lambda) v_x-\xi\beta v-\xi\ln v_x-\frac{1}{v_x}v_{xx}-\xi=0.\label{eq:logarithmic}
\end{equation}
This is a nonlinear second order ODE with the initial condition
\begin{equation}
v(0)=\frac{\mu+1}{\beta+\lambda}v_x(0)-\frac{\ln v_x(0)+1}{\beta+\lambda}.\label{eq:warpoclog}
\end{equation}
For the existence of solutions, see \cite[Chp. 1]{CoddLev}.
Apart from the initial condition above,
one more initial condition is required to ensure the uniqueness of solutions. Similarly to the case of the power utility function, the choice of this condition is postponed to Section \ref{sec:NumericalAnalysis}.
By a Riccati substitution, $v_x(x)=y(v),$ we transform Equation (\ref{eq:logarithmic}) into the following nonlinear first order ODE

\begin{equation}
(\xi\mu+\xi-\beta-\lambda)y-\xi\ln y-\xi\beta v-\xi+(\mu+1)yy_v-y_v=0. \label{eq:logupr}
\end{equation}

\begin{thm}\label{twr:twr2}
As $x\to\infty$, we have,
\begin{eqnarray}
v(x)\sim \frac{1}{\beta}\left(\ln(\beta(x+1))-1\right);\label{jedenb}\\
v_x(x)\sim\frac{1}{\beta(x+1)};\label{dwab}\\
c^*(x)\sim\beta x +\beta-1.\label{trzyb}
\end{eqnarray}
\end{thm}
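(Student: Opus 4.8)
The plan is to mirror the structure that (presumably) underlies the proof of Theorem \ref{twr:twr1}, adapting it to the logarithmic setting. The starting point is the transformed first-order equation \eqref{eq:logupr} together with the boundary information we already possess: by Lemma \ref{lemma:4} (whose proof applies verbatim here, since it only used Lemma \ref{yzero}), we know $y(v)\to 0$ as $v\to\infty$. The strategy is first to find the \emph{leading asymptotic order} of $y$ as a function of $v$ by a dominant-balance argument in \eqref{eq:logupr}, then to translate the relation $y(v)$ back into the $x$-variable via $v_x=y(v(x))$, and finally to recover $c^*(x)$ from \eqref{eq:dywlog}.

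First I would identify the dominant balance. As $v\to\infty$ we have $y\to 0$, so among the terms in \eqref{eq:logupr} the products $(\mu+1)yy_v$ and $(\xi\mu+\xi-\beta-\lambda)y$ are lower order, while $-\xi\ln y$ and $-\xi\beta v$ both blow up. The natural guess, consistent with the claimed answer \eqref{dwab}, is that the derivative term $y_v$ balances the large terms $-\xi\ln y-\xi\beta v$. Substituting the ansatz $y\sim 1/(\beta v)$ should be too crude directly; instead I would posit $y(v)\sim e^{-\beta v}\cdot(\text{algebraic correction})$ and verify it satisfies \eqref{eq:logupr} to leading order. Concretely, if $y_v\approx -\xi\ln y-\xi\beta v$ dominates, then $\ln y\approx -\beta v$ at leading order, giving $y\approx C e^{-\beta v}$; feeding this back determines the correction. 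The target relations suggest that the correct statement is $\ln y \sim -\beta v -1 + \ln\beta$ asymptotically, equivalently $v\sim \tfrac{1}{\beta}(-\ln y -1+\ln\beta)$, which is exactly \eqref{jedenb} once we substitute $y=v_x\sim 1/(\beta(x+1))$.

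Next I would close the loop between $v$ and $x$. From $v_x=y(v)$ we have $x=\int^{v}\tfrac{dw}{y(w)}+\text{const}$, and inserting the leading behaviour $y(w)\sim \beta\,e^{\,\beta(\ln\beta-1)^{-1}\cdots}$ — more precisely, inverting the asymptotic relation between $v$ and $y$ obtained above — should yield $v$ as a function of $x$ and hence \eqref{jedenb}. Differentiating, or reading off $y$ directly at the computed value of $v$, gives \eqref{dwab}. Finally \eqref{trzyb} follows immediately from \eqref{eq:dywlog}: $c^*(x)=1/v_x(x)-1\sim \beta(x+1)-1=\beta x+\beta-1$, which is the cleanest of the three and serves as a consistency check on the preceding two.

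The main obstacle I anticipate is rigorously justifying the dominant-balance heuristic, i.e. proving that the neglected terms $(\mu+1)yy_v$, $(\xi\mu+\xi-\beta-\lambda)y$ and the constant $-\xi$ really are asymptotically negligible relative to the retained ones, and that the formal asymptotic solution is genuinely attained by the actual solution selected by the boundary conditions rather than by a spurious branch. This is the same delicacy that arises in \cite{Hub} with the ``peculiar'' non-homogeneous equations, and it is where care is needed: one typically sets $y(v)=\tfrac{1}{\beta}e^{-\beta v}\big(\beta^{?}\big)\big(1+\eta(v)\big)$ or works with $w(v):=\ln y+\beta v$ and shows $w$ converges to the constant $\ln\beta-1$, controlling the error by a Gronwall-type or monotonicity estimate on $\eta$ (resp. $w$) derived from \eqref{eq:logupr}. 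Establishing that this error term vanishes, and that the convergence transfers through the integral $x=\int dv/y$ without accumulating, is the technical heart of the argument; the rest is routine substitution and bookkeeping.
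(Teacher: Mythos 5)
Your outline follows the same overall route as the paper's proof: work with the transformed equation \eqref{eq:logupr}, use $y\to 0$ (Lemma \ref{yzero}) to discard $(\mu+1)yy_v$ and $(\xi\mu+\xi-\beta-\lambda)y$, conclude $\ln y\sim -\beta v$ and hence $y(v)\sim e^{-\beta v-1}$, integrate $v_x=y(v)$ back to the $x$-variable, and read off \eqref{trzyb} from \eqref{eq:dywlog}. One small slip in your bookkeeping: the constant is $y(v)\sim e^{-\beta v-1}$, not $\beta e^{-\beta v-1}$ (the $\ln\beta$ should not appear); this is harmless here because all three asymptotics in the theorem are insensitive to the multiplicative constant in $y$, which cancels when one integrates $v_x=Ce^{-\beta v}$.

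The genuine gap is the step you yourself flag as ``the technical heart'': you never justify that the balance $\xi\ln y\sim-(\xi\beta v+\xi)$ is the one realized by the actual solution rather than a competing balance. Your selection of the dominant balance is reverse-engineered from the answer (``the natural guess, consistent with the claimed answer''), and the proposed Gronwall/monotonicity control of $w=\ln y+\beta v$ is only sketched, not carried out. This is exactly where the paper does its work: it invokes Mari\'c's Hardy-field result \cite{Mar}, which guarantees that among the terms of \eqref{eq:logupr} exactly two must be of the same order as $v\to\infty$, reducing the problem to three concrete cases --- (a) $y_v$ versus $\xi\beta v+\xi$, (b) $y_v$ versus $\xi\ln y$, (c) $\xi\beta v+\xi$ versus $\xi\ln y$ --- and then eliminates (a) and (b) by explicit contradictions with $y\to 0$ (case (a) forces $y$ to grow quadratically in $v$; case (b), after integration and Karamata's theorem \cite{Karamata}, forces the nonzero constant $\xi l$ to equal a quantity tending to zero, as in \eqref{eq:logcontradiction}). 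Without such an exhaustive elimination, or a completed error estimate for $w$, the passage from the heuristic balance to \eqref{dwab} is not a proof. Note also that your balance as literally stated --- $y_v$ against the sum $-\xi\ln y-\xi\beta v$ --- already presupposes that the two unbounded terms cancel to leading order, which is precisely the assertion that needs to be established.
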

The proof of Theorem \ref{twr:twr2} is given in Appendix \ref{appb}.

%
%
%
%
\section{Numerical Analysis}
\label{sec:NumericalAnalysis}

In this section, we provide a numerical algorithm for calculating the value function
for the classical risk process \eqref{CL2} with exponentially distributed claims and power utility function (\ref{uzpol}).
 To do this, we first find $v_x(0)$. Then, based on the boundary condition (\ref{eq:warpoc}), we determine $v(0)$ and numerically solve Equation (\ref{eq:exponential}).
Obviously, we could propose a similar algorithm for the logarithmic utility function. The considerations regarding the second boundary condition which we formulate in Remark \ref{uw:2ndcondition} remain
true when considering the logarithmic utility functions.

Note that a similar analysis is presented in \cite{BarPal0}, from which we retrieve some numerical considerations in the case of the power utility, see Table 1 and Figures 1 and 2. Note that \cite{BarPal0} does not present the derivation of the HJB equation nor the analysis of the logarithmic utility function.

\begin{table}[h]
\begin{center}
\begin{tabular}{|c|c|c|c|}
\hline
$x$	&	$v(x)$	&	$v_x(x)$	&	$c(x)$	\\	\hline
0	&	6,8021	&	1,9000	&	0,2770	\\	\hline
1	&	8,5790	&	1,6929	&	0,3489	\\	\hline
2	&	10,2022	&	1,5575	&	0,4122	\\	\hline
3	&	11,7010	&	1,4431	&	0,4802	\\	\hline
4	&	13,0940	&	1,3454	&	0,5525	\\	\hline
5	&	14,3963	&	1,2613	&	0,6286	\\	\hline
6	&	15,6203	&	1,1884	&	0,7081	\\	\hline
7	&	16,7762	&	1,1247	&	0,7905	\\	\hline
8	&	17,8723	&	1,0687	&	0,8755	\\	\hline
9	&	18,9158	&	1,0192	&	0,9626	\\	\hline
10	&	19,9126	&	0,9752	&	1,0515	\\	\hline
\end{tabular}
\caption{Functions $v(x)$ and $v_x(x)$ for $\alpha=0.5$, $\beta=0.05$, $\mu=0.26$, $\xi=0.4$, $\lambda=0.1$ and $v_x(0)=1.9$, $v(0)=6.8021$.}
\label{tab:tab2}
\end{center}
\vspace{-0.5cm}
\end{table}

\begin{remark}\label{uw:2ndcondition}\rm
The choice of $v_x(0)$ is crucial in the context of the optimality of the solution of the HJB equation. Indeed, if we choose $v_x(0)$ and it is too big, then $v(x)$ and $v_x(x)$ go to infinity as $x\to\infty$. In fact, by (\ref{eq:dywidendypotegowa}) the discounted cumulative dividends go to $0$
(see Table 2). This situation corresponds to a bubble, meaning that the value of the company is not increased by the dividend payments and we cannot derive an optimal solution.

\begin{figure}
\label{fig:Rys1}
\begin{center}
\subfigure[Function $v(x)$]{\includegraphics[width=0.4\textwidth]{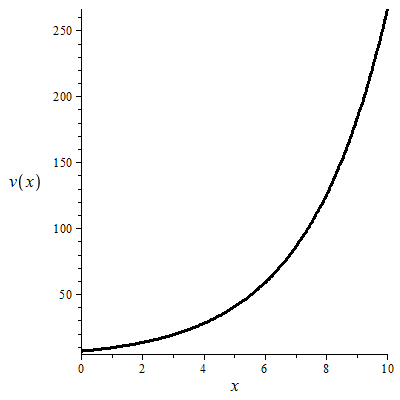}}
\subfigure[Function $v_x(x)$]{\includegraphics[width=0.4\textwidth]{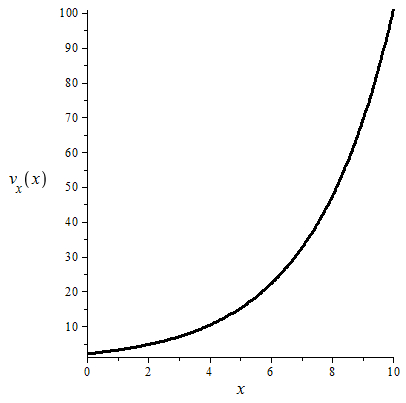}}
\caption{Functions $v(x)$ and $v_x(x)$ for $\alpha=0.5$, $\beta=0.05$, $\mu=0.26$, $\xi=0.4$, $\lambda=0.1$ and $v_x(0)=2$, $v(0)=6.8$.}\label{fig:Rys1}
\end{center}
\end{figure}

\begin{table}[h]
\begin{center}
\begin{tabular}{|c|c|c|c|}
\hline
$x$	&	$v(x)$	&	$v_x(x)$	&	$c(x)$	\\	\hline
0	&	6,8000	&	2,0000	&	0,2500	\\	\hline
1	&	9,4022	&	3,1941	&	0,0980	\\	\hline
2	&	13,3275	&	4,7502	&	0,0443	\\	\hline
3	&	19,1343	&	7,0039	&	0,0204	\\	\hline
4	&	27,6771	&	10,2878	&	0,0094	\\	\hline
5	&	40,2103	&	15,0801	&	0,0044	\\	\hline
6	&	58,5692	&	22,0787	&	0,0021	\\	\hline
7	&	85,4378	&	32,3029	&	0,0010	\\	\hline
8	&	124,7394	&	47,2425	&	0,0004	\\	\hline
9	&	182,2094	&	69,0750	&	0,0002	\\	\hline
10	&	266,2320	&	100,9833	&	0,0001	\\	\hline
\end{tabular}
\caption{Functions $v(x)$ and $v_x(x)$ for $\alpha=0.5$, $\beta=0.05$, $\mu=0.26$, $\xi=0.4$, $\lambda=0.1$ and $v_x(0)=2$, $v(0)=6.8$.}\label{tab:tab1}
\vspace{-0.5cm}
\end{center}
\end{table}

%
When $v_x(0)$ is sufficiently large like Figure 2 shows, the function $v(x)$ is concave and $v_x(x)$ tends to $0$ as $x\to\infty$,
allowing the cumulative discounted dividend payments to increase (see Table 1).

\begin{figure}[h]
\begin{center}
\subfigure[Function $v(x)$]{
\includegraphics[width=0.4\textwidth]{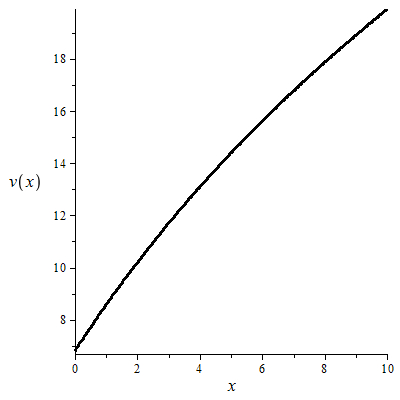}
}
\subfigure[Function $v_x(x)$]{
\includegraphics[width=0.4\textwidth]{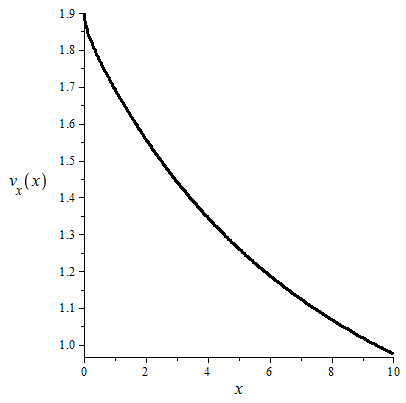}
}
\caption{Functions $v(x)$ and $v_x(x)$ for $\alpha=0.5$, $\beta=0.05$, $\mu=0.26$, $\xi=0.4$, $\lambda=0.1$ and $v_x(0)=1.9$, $v(0)=6.8021$.}
\label{fig:rys2}
\end{center}
\vspace{-0.5cm}
       \label{fig:rys2}
\end{figure}

\end{remark}

To find $v_x(0)$ we propose the following algorithm.
\begin{itemize}
		\item Set initial value $v_x(0)=:b$,
		\item From the equality (\ref{eq:warpoc})
derive initial value $v(0)=a$;
		\item Solve numerically  the differential Equation (\ref{eq:exponential}) with the initial condition $v(0)=a$;
		\item Calculate $c(x)$ using $c(x)=v_x(x)^{-\frac{1}{1-\alpha}}$;
       \item Using the least squares method,  approximate $c(x)$ be the linear function $\hat{c}(x)=a_1x+b_1$.
       Because of our results from Theorem \ref{twr:twr1}, we assume that $\hat{c}(x)$ is a linear function;
		\item Let $x(t)$ be a trajectory of the regulated process starting from $0$ until the first time claim arrival $T$. Hence
\begin{equation}\label{trajektoria}\mu-\hat{c}(x(t))=x'(t),\quad x(0)=0,\end{equation}
i.e.,
\begin{equation*}
x(t)=\frac{\mu-b_1}{a_1}-\frac{\mu-b_1}{a_1}e^{-a_1t};
\end{equation*}
		\item Using the least squares method, approximate $v(x)$ by a function of the form $\hat{v}(x)=a_2x^{\alpha}+b_2$. Because of our results from Theorem \ref{twr:twr1}, we assume that $\hat{v}(x)$ is a power function;
		\item Calculate
\begin{equation}\label{pierwszyskok}
A=\mathbb{E}\left[e^{-\beta T}\hat{v}(X(T)-S)\right]+\mathbb{E}\left[\int_0^{T}e^{-\beta t}U(\hat{c}(X(t))d t\right],
\end{equation}
where $T\eqdistr {\rm Exp}(\lambda),\ S\eqdistr {\rm Exp}(\xi)$.
		\item Calculate the value $a-A$;
\item Repeat until $|a-A|<\epsilon$ for fixed $\epsilon>0$.
\end{itemize}

If we choose $v_x(0)=:b$ hence also $v(0)=a$ correctly,
then observing the regulated process right after the first jump occurs,
the left hand side $A$ of (\ref{pierwszyskok}) gives the true estimator of $v(0)$.
Hence, $A$ will approximate $a$.
In practice, we should look for the correct $a$ changing $v_x(0)$ by some small fixed value $d>0$
until $|a-A|<\epsilon$ for a prescribed \mbox{precision $\epsilon$}.

We apply the above procedure in a ten points least square algorithm to the data given in Figure 2.
The results are described in the Figures 3 and 4 and the Table 3.
At the beginning, we chose $d:= 0.01$. We notice that for $b\geq 1.97$, we have a bubble. As per Remark \ref{uw:2ndcondition}, we cannot derive an optimal solution. Thus, the values of $b$ are not greater \mbox{than $1.97$}.

We start from the value $1.96$ for $b$ and observe the difference $a-A$ . Then, we reduce $b$ by $d$. We noticed that the difference $a-A$ is getting smaller as we are decreasing $b$.
We stop the above procedure when $b=1.88$ because then $x(t)<0$ for $t>0$. Similarly, we can check that all the values of $b$ less that $1.88$ are too small.
Then, successively, we decrease $d$ to $0.001$ and then to $d:=0.0001$.
 By repeating the above procedure, we can find the "correct" $a$. For example, if we choose $\epsilon=0.01$ then $a=6.800222221$.  Table 3 explains how the algorithm works. It contains the results of each step of the loop of this algorithm until $\epsilon=0.005793808$. Thus, the "correct" value of $a$ is $6.804486491$.
%
%
%
\begin{figure}[h]
\begin{center}
\subfigure[Functions $c(x)$ and $\hat{c}(x)$]{
\includegraphics[width=0.4\textwidth]{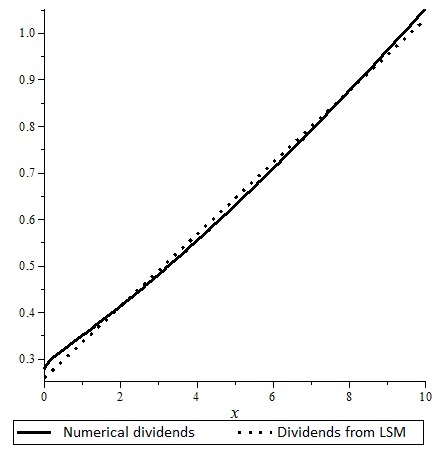}
}
\subfigure[Trajectory $x(t)$]{
\includegraphics[width=0.4\textwidth]{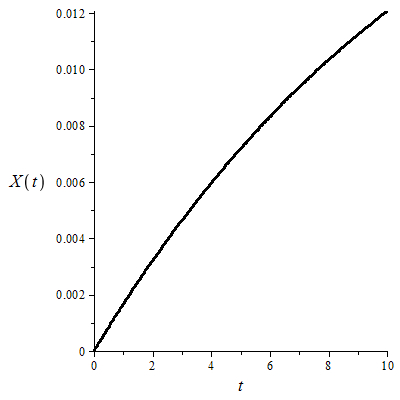}
}
\caption{Functions $c(x)$, $\hat{c}(x)$ and trajectory $x(t)$ for $\alpha=0.5$, $\beta=0.05$, $\mu=0.26$, $\xi=0.4$, $\lambda=0.1$ and $v_x(0)=1.9$, $v(0)=6.8021$.}
\label{fig:rys3}
\end{center}
\vspace{-0.5cm}
\end{figure}

\begin{figure}[h!]
\begin{center}

\includegraphics[width=0.5\textwidth]{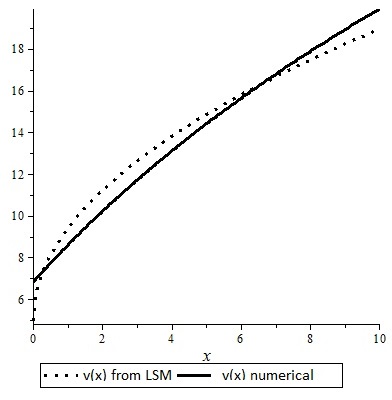}

\caption{Functions $v(x)$ and $\hat{v}(x)$ for $\alpha=0.5$, $\beta=0.05$, $\mu=0.26$, $\xi=0.4$, $\lambda=0.1$ and $v_x(0)=1.9$, $v(0)=6.8021$.}
\label{fig:rys4}
\end{center}
\vspace{-0.5cm}
\end{figure}

Let us recall that our main goal was to derive the asymptotic behavious of the value function for large initial reserves $x$ and to identify its corresponding optimal strategy.
The methodology was based on comparing the asymptototic behaviours of components of the HJB equation. 
This approach produces a very simple solution that can be used instead of  numerically solving the HJB equation.

\begin{table}[h]\footnotesize
\begin{center}
\begin{tabular}{|c|c|c|c|c|}
\hline
\multicolumn{2}{|c|}{$b$}	&		&		&		\\	\cline{1-2}
correctness & value & $a$ & $A$ & $a-A$\\ \hline
t.b.	&	$\geq$1,97	&	-	&	-	&	-	\\	\hline
c.	&	1,96	&	6,798693877	&	6,783185889	&	0,015507988	\\	\hline
c.	&	1,95	&	6,798803418	&	6,784849201	&	0,013954217	\\	\hline
c. &  1,94  &  6,799092783 &  6,786580941 &  0,012511842 \\	\hline
c.	&	1,93	&	6,799564767	&	6,788388955	&	0,011175812	\\	\hline
c.	&	1,92	&	6,800222221	&	6,790283409	&	0,009938812	\\	\hline
c.	&	1,91	&	6,801068062	&	6,792277924	&	0,008790138	\\	\hline
c.	&	1,90	&	6,802105263	&	6,794392618	&	0,007712645	\\	\hline
c.	&	1,89	&	6,803336861	&	6,796662198	&	0,006674663	\\	\hline
t.s.	&	1,88	&	-	&	-	&	-	\\	\hline
t.s.	&	1,881	&	-	&	-	&	-	\\	\hline
c.	&	1,882	&	6,804464186	&	6,798652236	&	0,005811950	\\	\hline
c.	&	1,8819	&	6,804479085	&	6,798679195	&	0,005799890	\\	\hline
t.s.	&	1,8818	&	-	&	-	&	-	\\	\hline
t.s. & $\vdots$	&	-	&	-	&	-	\\	\hline
t.s.	&	1,88185	&	-	&	-	&	-	\\	\hline
c.	&	1,88186	&	6,804485051	&	6,798690050	&	0,005795001	\\	\hline
c.	&	1,881859	&	6,804485199	&	6,798690322	&	0,005794877	\\	\hline
c.	&	1,881858	&	6,804485348	&	6,798690594	&	0,005794754	\\	\hline
c.	&	1,881857	&	6,804485498	&	6,798690867	&	0,005794631	\\	\hline
c.	&	1,881856	&	6,804485647	&	6,798691139	&	0,005794508	\\	\hline
c.	&	1,881855	&	6,804485795	&	6,798691412	&	0,005794383	\\	\hline
c.	&	1,881854	&	6,804485945	&	6,798691685	&	0,005794260	\\	\hline
c.	&	1,881853	&	6,804486095	&	6,798691958	&	0,005794137	\\	\hline
c.	&	1,881852	&	6,804486243	&	6,798692231	&	0,005794012	\\	\hline
c.	&	1,881851	&	6,804486392	&	6,798692504	&	0,005793888	\\	\hline
t.s.	&	1,881850	&	-	&	-	&	-	\\	\hline
t.s. & $\vdots$	&	-	&	-	&	-	\\	\hline
t.s.	&	1,8818503	&	-	&	-	&	-	\\	\hline
c.	&	1,8818504	&	6,804486482	&	6,798692667	&	0,005793815	\\	\hline
c.	&	1,88185039	&	6,804486484	&	6,798692671	&	0,005793813	\\	\hline
c.	&	1,88185038	&	6,804486485	&	6,798692673	&	0,005793812	\\	\hline
c.	&	1,88185037	&	6,804486486	&	6,798692675	&	0,005793811	\\	\hline
c.	&	1,88185036	&	6,804486488	&	6,798692679	&	0,005793809	\\	\hline
c.	&	1,88185035	&	6,804486489	&	6,798692681	&	0,005793808	\\	\hline
t.s.	&	1,88185034	&	-	&	-	&	-	\\	\hline
t.s.	&	1,881850341	&	-	&	-	&	-	\\	\hline
c.	&	1,881850342	&	6,804486491	&	6,798692684	&	0,005793807	\\	\hline
\end{tabular}
\caption{The values of initial conditions obtained from procedure of finding $v_x(0)$ (c.=correct, t.b.=too big, t.s.=too small).}
\label{tab:tab3}
\end{center}
\vspace{-0.5cm}
\end{table}


Still, to compare the asymptotics with the exact values of the value function, we propose a numerical algorithm for solving HJB equations.
Using it, we can observe that the asymptotic values are very close to the true ones. In particular, Figure 3 shows that the optimal strategy of paying dividends with intensity $d_t=c(X_t)$ in the case of the
power-type utility function is asymptotically linear as
\eqref{trzy} suggests. What is interesting, it that this is true even for small values of reserves (starting from $x=4$).
We have observed that this is true for other sets of parameters, which is very promising.

\appendix

\section[\appendixname~\thesection]{Proof of Theorem \ref{twr:twr1}}\label{appa}
\begin{proof}
When $\alpha=\dfrac{p}{q}$, the Equation \eqref{eq:exponentialupr} has the following form:

\begin{equation*}
\mu y_vy +(\xi\mu-\beta-\lambda)y-\xi\beta v+\xi\frac{q-p}{p}y^{\frac{p}{p-q}}-y^{\frac{p}{p-q}}y_v=0.
\end{equation*}
If we make the substitution $z=y^{\frac{1}{p-q}}$, then $z_v=\frac{1}{p-q}y_vzz^{q-p}$, and furthermore

\begin{equation*}
(\xi\mu-\beta-\lambda)z-\xi\beta vz^{q-p+1}+\xi\frac{q-p}{p}z^{q+1}-z_v\left(q-p\right)\left(\mu z^{p-q}-z^p\right)=0.
\end{equation*}
If we multiply both sides of the equation  by $z^{q-p}$, we obtain

\begin{equation}
(\xi\mu-\beta-\lambda)z^{q-p+1}-\xi\beta vz^{2q-2p+1} +\xi\frac{q-p}{p}z^{2q-p+1}-\left(q-p\right)\mu z_v-\left(p-q\right)z^qz_v=0, \label{eq:exponentialpq}
\end{equation}
specifically, an equation of the form
\begin{equation}
P(v,z)-z_vQ(v,z)=0,\label{eq:polynomials}
\end{equation}
where $P,Q$ are polynomials in $v$ and $z$.
Recall that, from \eqref{vxinfty}, $v\rightarrow\infty$.
Any term on the left-hand side of \eqref{eq:polynomials} is of the form $z^ma_m(v)$ or $z_vz^na_n(v)$. \cite{Mar} proved that if two functions $a_m,a_n\in\mathcal{H}$ (where $\mathcal{H}$ denote the class of Hardy functions) then the set of all terms on the left-hand  side of Equation \eqref{eq:polynomials} is totally ordered with respect to the relation $\succeq$, where $a\succeq b$, for $v\to\infty$ means that either $\frac{a}{b}\to\infty$ or $\frac{a}{b}\to l (\neq 0)$ as $v\to\infty$.
In other words, heuristically, we can order all terms (which are functions of $v$) according to the speed that they tends to infinity as  $v\to\infty$.
\cite[p. 195]{Mar} shows that in this set exist two terms of the same order; namely, their quotient tends to a finite limit $l\neq 0$ for $v\to\infty$.
Using this result, we can derive the asymptotic behaviour of the solutions of Equation \eqref{eq:polynomials}.

Firstly, note that $z\to\infty$. Because of that, we note that in the Equation (\ref{eq:exponentialpq}), the term $\left(q-p\right)\mu z_v$ is of a smaller order than the other terms, which contain $z_v$. Similarly, the term $(\xi\mu-\beta-\lambda)z^{q-p+1}$ has a smaller order than the other terms of Equation \eqref{eq:exponentialpq}, which do not contain $z_v$. Since we know that there exists two terms of the Equation (\ref{eq:exponentialpq}) of the same order, we have three possibilities to produce the asymptotic behaviour of a solution $v$ of the Equation \eqref{eq:exponentialpq}:
\begin{enumerate}
	\item[(a)] $\xi\beta vz^{2q-2p+1}$ and $\xi\frac{q-p}{p}z^{2q-p+1}$;
	\item[(b)] $\xi\frac{q-p}{p}z^{2q-p+1}$ and $\left(p-q\right)z^qz_v$;
	\item[(c)] $\xi\beta vz^{2q-2p+1}$ and $\left(q-p\right)z^qz_v$.
\end{enumerate}
\begin{lem}\label{added}
Only the case (a) above produces a feasible asymptotic behaviour.
\end{lem}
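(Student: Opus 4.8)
The plan is to treat each of the three candidate balances (a)--(c) as a \emph{dominant balance} for \eqref{eq:exponentialpq}, solve the resulting reduced first-order ODE asymptotically, and then test the outcome against the two hard constraints already in hand: $v\to\infty$ (from \eqref{vxinfty}) and $y=v_x\to 0$, hence $z=y^{1/(p-q)}\to\infty$ (from Lemma \ref{lemma:4}), together with the self-consistency requirement that the \emph{third} candidate term be of strictly smaller order than the balancing pair. Throughout I write $k:=q-p>0$, use that $z$ is increasing in $v$ (so $z_v>0$, since $v$ is concave), and record the signs of the relevant terms of \eqref{eq:exponentialpq}: $T_2=-\xi\beta v z^{2k+1}<0$, $T_3=\xi\frac{k}{p}z^{p+2k+1}>0$ and $T_5=k z^{p+k}z_v>0$, while $T_1,T_4$ are already known to be subdominant.

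First I would dispose of case (b) by a sign argument, with no computation. If two terms constitute the dominant balance then, since every other term is of strictly smaller order, those two must cancel at leading order; by the cited result of \cite{Mar} their ratio tends to a finite nonzero limit, so cancellation forces them to have opposite signs. But the pair in (b), namely $T_3$ and $T_5$, are \emph{both} positive, so their sum is of their common leading order and cannot be offset by the lower-order remainder. Hence (b) is impossible.

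The real work is case (c). Here $T_2$ and $T_5$ have opposite signs, so the sign test is inconclusive and I must analyse the reduced equation $\xi\beta v z^{2k+1}\sim k z^{p+k}z_v$, i.e. $z_v\sim\frac{\xi\beta}{k}\,v\,z^{k+1-p}$. Setting $s:=z^p$ turns this into $s_v\sim C\,v\,s^{\gamma}$ with $C=\frac{p\xi\beta}{k}>0$ and $\gamma=k/p=(1-\alpha)/\alpha>0$. Since $s=z^p\to\infty$ and $\gamma>0$, we have $s^{\gamma}\ge 1$ for large $v$, whence $s_v\gtrsim v$ and therefore $s\gtrsim v^{2}$, so $s/v\to\infty$. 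On the other hand, for $T_3$ to be of smaller order than the balancing pair we need $T_3/T_2=\frac{k}{p\beta}\,z^{p}/v=\frac{k}{p\beta}\,s/v\to 0$, i.e. $s=o(v)$. These two conclusions are contradictory, so (c) cannot hold.

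Finally I would confirm that case (a) is genuinely feasible and therefore the only survivor. The balance $\xi\beta v z^{2k+1}\sim \xi\frac{k}{p}z^{p+2k+1}$ gives $v\sim\frac{k}{p\beta}z^{p}$, which is compatible with $v\to\infty$, $z\to\infty$ and $y=z^{-k}\to0$; a direct order count then shows the omitted term $T_5=O(v^{(k+1)/p})$ is dominated by $T_2=O(v^{(p+2k+1)/p})$, so the balance is self-consistent. This isolates (a) as the feasible regime, which is the assertion of the lemma; the precise constants of Theorem \ref{twr:twr1} then follow by feeding $v\sim\frac{k}{p\beta}z^{p}$ back through $z=v_x^{1/(p-q)}$. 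The main obstacle is precisely case (c), where no sign obstruction is available and one must extract the $v^{2}$ lower bound on $s=z^{p}$ from the balance and play it against the $o(v)$ subdominance requirement on $T_3$.
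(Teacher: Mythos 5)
Your proof is correct and establishes the lemma, but it eliminates cases (b) and (c) by genuinely different arguments from the paper's. For (b), the paper integrates the reduced relation $z^{p-q-1}z_v=-\tfrac{\xi}{p}g(v)$ and applies Karamata's theorem to get $z\sim Cv^{1/(p-q)}\to 0$, contradicting $z\to\infty$; you instead observe that the two candidate terms are both positive and so cannot cancel at leading order. Your sign argument is shorter, but its one input, $z_v>0$, should not be justified by concavity of $v$ (which the paper never establishes); it follows instead from the case hypothesis itself: the ratio of the two terms tending to a nonzero limit forces $z_v$ to have eventually constant sign, and $z\to\infty$ rules out the negative sign. For (c), the paper solves the reduced ODE explicitly (splitting into $q=2p$ and $q\neq 2p$, obtaining $(v^2/2+c)^{1/(2p-q)}$ or an exponential) and then substitutes back to find that $\xi\tfrac{q-p}{p}z^{2q-p+1}$ dominates, yielding the false identity $1\sim 0$; you avoid both the explicit solution and the subcase split by extracting the differential inequality $s_v\gtrsim v$ for $s=z^p$, hence $s\gtrsim v^2$, and playing it against the subdominance requirement $s=o(v)$. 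That is the same contradiction (the omitted term in fact dominates the assumed balancing pair) reached more economically. Your case (a) matches the paper's, except that you write $v\sim\frac{k}{p\beta}z^p$ with the limit constant already fixed at $1$; strictly the balance only gives $v\sim\frac{lk}{p\beta}z^p$, and one must substitute back into \eqref{eq:exponentialpq} to conclude $l=1$, as the paper does --- a step needed for the constants in Theorem \ref{twr:twr1}, though not for the lemma as stated.
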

\begin{proof}
Note that in case (a), both terms have the same order. Indeed, let

\begin{equation*}
\lim_{v\to\infty}\frac{\xi\beta vz^{2q-2p+1}}{\xi\frac{q-p}{p}z^{2q-p+1}}=l(\neq 0).
\end{equation*}
Denote
\begin{equation*}
\frac{\xi\beta vz^{2q-2p+1}}{\xi\frac{q-p}{p}z^{2q-p+1}}=g(v),
\end{equation*}
where $\lim_{v\to\infty}g(v)=l$,
which reduces to

\begin{equation*}
z^{-p}(v)=\frac{q-p}{\beta  p}v^{-1} g(v).
\end{equation*}
Since $g(v)\sim l$, this becomes

\begin{equation*}
z(v)\sim\left(\frac{\beta p}{l(q-p)}\right)^{\frac{1}{p}}v^{\frac{1}{p}}, \quad v\rightarrow \infty.
\end{equation*}
Placing the above asymptotics into Equation \eqref{eq:exponentialpq} and dividing by $v^{\frac{2q-p+1}{p}}$ gives $l=1$. Finally, we obtain the following asymptotics of $z(v)$:

\begin{equation}
z(v)\sim\left(\frac{\beta p}{q-p}\right)^{\frac{1}{p}}v^{\frac{1}{p}},  \quad v\rightarrow \infty.\label{eq:asymptotykaz}
\end{equation}
Obviously, in this case $z\to\infty$ for $v\to\infty$, as required.\\

Similarly, in case (b), we have

\begin{equation*}
\lim_{v\to\infty}\frac{\left(p-q\right)z^qz_v}{\xi\frac{q-p}{p}z^{2q-p+1}}=l(\neq 0).
\end{equation*}
Following the same steps as in case (a),  let

\begin{equation*}
\frac{\left(p-q\right)z^qz_v}{\xi\frac{q-p}{p}z^{2q-p+1}}=g(v),
\end{equation*}
where $\lim_{v\to\infty} g(v)=l(\neq 0).$
This reduces to

\begin{equation*}
z^{-q+p-1} z_v=-\frac{\xi}{p}g(v),
\end{equation*}
which after integration becomes

\begin{equation*}
z^{-q+p}(v)=\frac{\xi(q-p)}{p}\int_0^vg(s)ds.
\end{equation*}
From Karamata Theorem (see \cite[Prop. 1.5.8]{Karamata}) $\int_0^vg(s)ds \sim lv$, leading to

\begin{equation*}
z(v)\sim\left(\frac{l\xi(q-p)}{p}\right)^{\frac{1}{p-q}}v^{\frac{1}{p-q}},
\end{equation*}
for $v\to\infty$.
However, for $p-q<0$, we have $z\to 0$ as $v\to\infty$, which contradicts the assumption that $z\to\infty$ for $v\to\infty$. Thus, this is not acceptable.

In case (c), we have
\begin{equation*}
\lim_{v\to\infty}\frac{\left(q-p\right)z^qz_v}{\xi\beta vz^{2q-2p+1}}=l(\neq 0).
\end{equation*}
Introducing
\begin{equation*}
g(v)=\frac{\left(q-p\right)z^qz_v}{\xi\beta vz^{2q-2p+1}}\sim l(\neq 0),
\end{equation*}
as $v\to\infty$, this simplifies into \begin{equation}\label{added1}
z^{2p-q-1}z_v=\frac{g(v)\xi\beta}{q-p}v.
\end{equation}
We will distinguish two cases. Using the same arguments as before, with respect to Karamata arguments given in \cite{Karamata}, for $v\to\infty$, we encounter two possible asymptotics:
\begin{enumerate}
	\item[I.] If $q\neq 2p$, then, via the separation of variables, we have
		\begin{equation*}
z(v)\sim\left(\frac{l\xi\beta(2p-q)}{q-p}\right)^{\frac{1}{2p-q}}\left(\frac{v^2}{2}+c\right)^{\frac{1}{2p-q}}.
		\end{equation*}
	\item[II.] If $q=2p$, then a simple integration leads to
\begin{equation*}
z(v)\sim e^{\frac{l\xi\beta}{p}\left(\frac{v^2}{2}+c\right)}.
\end{equation*}
\end{enumerate}
In both of the above cases, $c$ is a constant and its appearance is a consequence of the lack of uniqueness of the solutions
of Equation  \eqref{added1} due to the lack of sufficient boundary conditions \mbox{for $z$}.

Note that in the first case, the asymptotics of $z$ makes sense only if $q<2p$ because otherwise $z\to 0$ for $v\to\infty$, leading to a contradiction.
In both cases, after substituting the above asymptotics into Equation (\ref{eq:exponentialpq}), the term
including $\xi\frac{q-p}{p}z^{2q-p+1}$ dominates any other term.
Dividing both sides of Equation (\ref{eq:exponentialpq}) by this asymptotically dominant element leads to the false identity $1\sim 0$.
\end{proof}

We continue the proof of Theorem \ref{twr:twr1}.
From Lemma \ref{added},
the asymptotic solution of $z$ is given by (\ref{eq:asymptotykaz}).
When substituting $y=z^{p-q}$, the asymptotic behavior of $y(v)$ is given by
\begin{equation*}
y(v)\sim\left(\frac{\beta p}{q-p}\right)^{\frac{p-q}{p}}v^{\frac{p-q}{p}},
\end{equation*}
which, for $\alpha = \frac{p}{q}$, is equivalent to
\begin{equation*}
y(v)\sim\left(\frac{1-\alpha}{\alpha\beta}\right)^{\frac{1-\alpha}{\alpha}}v^{\frac{-(1-\alpha)}{\alpha}}.
\end{equation*}
Recall that $y(v(x))=v_x(x)$.
Hence
\begin{equation*}
v_x(x)\sim \left(\frac{1-\alpha}{\alpha\beta}\right)^{\frac{1-\alpha}{\alpha}}v(x)^{\frac{-(1-\alpha)}{\alpha}},
\end{equation*}
We can now solve (via a separation of variables) the equation
\begin{equation*}
f_x(x)= \left(\frac{1-\alpha}{\alpha\beta}\right)^{\frac{1-\alpha}{\alpha}}f(x)^{\frac{-(1-\alpha)}{\alpha}}
\end{equation*}
deriving $f(x)=\left(\frac{1-\alpha}{\beta}\right)^{1-\alpha}\frac{(x+c)^{\alpha}}{\alpha}$ for any constant $c$.
Applying classical Karamata's arguments  leads to $f(x)\sim v(x),$ as $x\rightarrow \infty.$
This produces (\ref{jeden}). Using (\ref{eq:dywidendypotegowa}) completes the proof.
\end{proof}
\section[\appendixname~\thesection]{Proof of Theorem \ref{twr:twr2}}\label{appb}
\begin{proof}
We use similar arguments as in the proof of Theorem \ref{twr:twr1}.
In fact, one can derive (\ref{eq:polynomials}), with the main difference that
 terms of the form $v^m\ln v$ and $v^n$ will appear in the expressions of $P$ and $Q.$
To satisfy the eliminating procedure given by \cite[Eq. (3.3)]{Mar}), we mimic all the arguments
from \cite{Mar} . Thus, one can conclude that also in the case of the logarithmic utility function
there exists two of the terms of the equation (\ref{eq:logupr}) of the same order.
Now, note that in the Equation (\ref{eq:logupr}), the term $(\mu+1)yy_v$ is of a smaller order than $y_v$. Similarly, the term $(\xi\mu+\xi-\beta-\lambda)y$ is of a smaller order than the
other elements, which do not contain $y_v$.
We then have three possibilities:
\begin{enumerate}
	\item[(a)] $y_v$ and $\xi\beta v+\xi$;
	\item[(b)] $y_v$ and $\xi\ln y$;
	\item[(c)] $\xi\beta v+\xi$ and $\xi\ln y$.
\end{enumerate}

In case (a)
\begin{equation*}
\lim_{v\to\infty}\frac{y_v}{\xi\beta v+\xi}=l(\neq 0),
\end{equation*}
gives
\begin{equation*}
y(v)\sim l\xi\beta\frac{v^2}{2}+l\xi v +c.
\end{equation*}
When $v\to\infty$, $y\to\pm\infty$, thus contradicting Lemma \ref{yzero} ($y\to 0$ when $v\to\infty$).

In case (b) we have
\begin{equation*}
\lim_{v\to\infty}\frac{y_v}{\xi\ln y}=l(\neq 0).
\end{equation*}
Let
\begin{equation*}
g(v)=\frac{y_v}{\xi\ln y},
\end{equation*}
with $\lim_{v\to\infty} g(v) =l(\neq 0).$ This is equivalent to

\begin{equation*}
\xi g(v)=\frac{y_v}{\ln y},
\end{equation*}
which after integration from $0$ to $v$, leads to

\begin{equation*}
\xi \int_0^vg(s) ds=\int_0^v \frac{y_s}{\ln y} ds,
\end{equation*} namely
\begin{equation*}
\xi \int_0^vg(s) ds=\frac{v}{\ln v}+ \frac{v}{(\ln v)^2} +2\int_0^v \frac{1}{(\ln y)^3} ds.
\end{equation*}
Using the direct half of Karamata Theorem (see \cite[Prop. 1.5.8]{Karamata}), we have that as $v\to\infty$,
\begin{equation*}
\xi v l \sim \frac{v}{\ln v}+ \frac{v}{(\ln v)^2} +2\int_0^v \frac{1}{(\ln y)^3} ds,
\end{equation*}
equivalent to
\begin{equation} \label{eq:logcontradiction}
\xi l  \sim \frac{1}{\ln v}+ \frac{1}{(\ln v)^2} +\frac{2}{v}\int_0^v \frac{1}{(\ln y)^3} ds.
\end{equation}
Thus, we obtain a contradiction, since the right hand side converges to zero as $v\to\infty$, whereas the left hand side converges to $\xi l \neq 0.$.

In case (c), we have
\begin{equation*}
\lim_{v\to\infty}\frac{\xi\beta v+\xi}{\xi\ln y}=l(\neq 0),
\end{equation*}
leading to
\begin{equation}
y(v)\sim e^{\frac{\beta}{l}v+\frac{1}{l}}, \quad v\to\infty. \label{eq:logyasympt}
\end{equation}
The above asymptotic behaviour makes sense only for $l<0$, because otherwise $y\to\infty$ when $v\to\infty$.
Substituting (\ref{eq:logyasympt}) into (\ref{eq:logupr}) gives $l=-1$.
Hence

\begin{equation*}
y(v)\sim e^{-\beta v-1}, \quad v\to\infty.
\end{equation*}
Recall that $y(v)=v_x(x)$. Thus, $v\sim a$ with $a$ solving the equation

\begin{equation*}
a_x(x)=e^{-\beta a(x)-1}.
\end{equation*}
This gives
\begin{equation*}
v(x)\sim\frac{1}{\beta}\left(\ln\left(\beta(x+C)\right)-1\right).
\end{equation*}
Deriving (\ref{dwab}) and (\ref{trzyb}) is thus straightforward.
\end{proof}

\small
\bibliographystyle{abbrv}

\end{document}